\documentclass{article}

\RequirePackage[OT1]{fontenc}
\RequirePackage[
amsthm,amsmath
]{imsart}

\usepackage{graphicx}
\usepackage{latexsym,amsmath}
\usepackage{amsmath,amsthm,amscd}
\usepackage{amsfonts}
\usepackage[psamsfonts]{amssymb}
\usepackage{enumerate}

\usepackage{url}

\usepackage{tocvsec2}

\usepackage{epstopdf}
\usepackage{float}



\startlocaldefs
\numberwithin{equation}{section}

\theoremstyle{plain} 
\newtheorem{theorem}{Theorem}[section]
\newtheorem{corollary}[theorem]{Corollary}

\newtheorem{proposition}[theorem]{Proposition}

\theoremstyle{definition} 

\theoremstyle{definition} 

\newtheorem*{ex*}{Example}
\theoremstyle{remark} 

\theoremstyle{remark} 
\newtheorem{remark}[theorem]{Remark}
\newtheorem*{remark*}{Remark}
\numberwithin{equation}{section}

\newcommand{\dd}{\partial}

\renewcommand{\dd}{{\operatorname{d}}}

\newcommand{\eqD}{\overset{\operatorname{D}}=}

\newcommand{\ch}{\operatorname{ch}}
\newcommand{\sh}{\operatorname{sh}}

\newcommand{\si}{\sigma}

\newcommand{\ka}{\kappa}
\newcommand{\la}{\lambda}

\newcommand{\ii}[1]{\,\mathbf{I}\{#1\}} 

\newcommand{\pd}[2]{\operatorname{\partial}_{#2}{#1}} 

\newcommand{\intr}[2]{\overline{#1,#2}}

\renewcommand{\P}{\operatorname{\mathsf{P}}} 
\newcommand{\E}{\operatorname{\mathsf{E}}}
\newcommand{\Var}{\operatorname{\mathsf{Var}}}

\newcommand{\R}{\mathbb{R}}

\newcommand{\EE}{\mathcal{E}}

\newcommand{\tc}{{\tilde{c}}}

\newcommand{\trho}{{\tilde{\rho}}}

\newcommand{\tZ}{{\tilde{Z}}}

\renewcommand{\le}{\leqslant}
\renewcommand{\ge}{\geqslant}

 \pagenumbering{arabic}

\newcommand{\X}{{\mathfrak{X}}}
\newcommand{\W}{{\mathfrak{P}}}

\endlocaldefs

\begin{document}

\begin{frontmatter}

\title{Optimal re-centering bounds, with applications to Rosenthal-type concentration of measure inequalities}
\runtitle{Re-centering bounds}

%

\begin{aug}
\author{\fnms{Iosif} \snm{Pinelis}
}
\runauthor{Iosif Pinelis}


\address{Department of Mathematical Sciences\\
Michigan Technological University\\
Houghton, Michigan 49931, USA\\
E-mail: \printead[ipinelis@mtu.edu]{e1}}
\end{aug}

\begin{abstract} 
For any nonnegative Borel-measurable function $f$ such that $f(x)=0$ if and only if $x=0$, the best constant $c_f$ in the inequality $\E f(X-\E X)\le c_f\E f(X)$ for all random variables $X$ with a finite mean is obtained. Properties of the constant $c_f$ in the case when $f=|\cdot|^p$ for $p>0$ are studied. 
Applications to concentration of measure in the form of Rosenthal-type bounds on the moments of separately Lipschitz functions on product spaces are given. 
\end{abstract}

  
%

\begin{keyword}[class=AMS]
\kwd[Primary ]{60E15}
\kwd[; secondary ]{46B09}
\end{keyword}


\begin{keyword}
\kwd{probability inequalities}
\kwd{Rosenthal inequality}
\kwd{sums of independent random variables}
\kwd{martingales}
\kwd{concentration of measure}
\kwd{separately Lipschitz functions}
\kwd{product spaces}
\end{keyword}

\end{frontmatter}

\settocdepth{chapter}

\tableofcontents 

\settocdepth{subsubsection}

\theoremstyle{plain} 
\numberwithin{equation}{section}


\section{Introduction}\label{intro} 

In many situations (as e.g.\ in \cite{nonlinear}), one starts with zero-mean random variables (r.v.'s), which need to be truncated in some manner, and then the means no longer have to be zero. So, to utilize such tools as the Rosenthal inequality for sums of independent zero-mean r.v.'s, one has to re-center the truncated r.v.'s. Then one will usually need to bound moments of the re-centered truncated r.v.'s in terms of the corresponding moments of the original r.v.'s. 
To be more specific, let $Z$ be a given r.v., possibly (but not necessarily) of zero mean. Next, let $\tZ$ be a truncated version of $Z$ such that $|\tZ|\le|Z|$; possibilities here include letting $\tZ$ equal $Z\ii{Z\le z}$ or $Z\ii{|Z|\le z}$ or $Z\wedge z$, for some $z>0$; cf.\ \cite{winzor,tilted-mean}. 
Assume that $\E|\tZ|<\infty$. 
Then for any $p\ge1$ one can 
use the inequalities $|x-y|^p\le2^{p-1}(|x|^p+|y|^p)$ and $(\E|\tZ|)^p\le\E|\tZ|^p$,  	
to 
write 
\begin{equation}\label{eq:Z}
\E|\tZ-\E\tZ|^p
\le2^p\E|\tZ|^p\le2^p\E|Z|^p,  	
\end{equation}
as is oftentimes done. 
However, the factor $2^p$ in \eqref{eq:Z} can be significantly improved, especially for $p\ge2$. For instance, it is clear that for $p=2$ this factor can be reduced from $2^2=4$ to $1$. 
More generally, for every real $p>1$ we shall provide the best constant factor $C_p$ in the inequality 
\begin{equation}\label{eq:p}
	\E|X-\E X|^p\le C_p\E|X|^p 
\end{equation}
for all r.v.'s $X$ with a finite mean $\E X$. 
In particular, 
$C_p$ improves the factor $2^p$ more than $6$ times for $p=3$, and  
for large $p$ this improvement is asymptotically $\sqrt{8ep}$ times; see parts \eqref{C_3} and \eqref{asymp} of Theorem~\ref{prop:p} and the left panel in Figure~\ref{fig:graphs} in this paper. 
In fact, in Theorem~\ref{prop:centring} below we shall present an extended version of the exact inequality \eqref{eq:p}, for a quite general class of moment functions $f$ in place of the power functions $|\cdot|^p$. 

Another natural application of these results is to concentration of measure for separately Lipschitz functions on product spaces. In Section~\ref{concentr} of this paper, we shall give 
Rosenthal-type bounds on the moments of such functions. Similar extensions of the von Bahr--Esseen inequality were given in \cite{bahr-esseen}. 

\section{Summary and discussion}\label{summary}  

Let $f\colon\R\to\R$ be any nonnegative Borel-measurable function such that $f(x)=0$ if and only if $x=0$. 
Let $X$ stand for any 
random variable (r.v.) with a finite mean $\E X$. 

\begin{theorem}\label{prop:centring}
One has
\begin{equation}\label{eq:centring}
	\E f(X-\E X)\le c_f\E f(X), 
\end{equation}
where 
\begin{equation}\label{eq:cf}
	c_f:=\sup\Big\{\frac{af(b)+bf(-a)}{af(b-t)+bf(-a-t)}\colon a\in(0,\infty), b\in(0,\infty), 
t\in\R\Big\} 
\end{equation}
is the best possible constant factor in \eqref{eq:centring} (over all r.v.'s $X$ with a finite mean). 
\end{theorem}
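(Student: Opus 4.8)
The plan is to establish \eqref{eq:centring} with the constant $c_f$ defined in \eqref{eq:cf} by a two-step reduction: first show that the supremum over all r.v.'s $X$ of the ratio $\E f(X-\E X)/\E f(X)$ is attained (or approached) on two-point distributions, and then recognize the resulting expression as exactly the supremum displayed in \eqref{eq:cf}. For the lower bound $c_f$ is best possible, I would simply plug in two-point r.v.'s: given $a,b\in(0,\infty)$, let $X$ take the value $b$ with probability $\tfrac{a}{a+b}$ and the value $-a$ with probability $\tfrac{b}{a+b}$, so that $\E X=0$; then for any real $t$ consider $X+t$, which has mean $t$, and compute that $\E f\bigl((X+t)-\E(X+t)\bigr)=\E f(X)=\tfrac{af(b)+bf(-a)}{a+b}$ while $\E f(X+t)=\tfrac{af(b+t)+bf(-a+t)}{a+b}$; replacing $t$ by $-t$ matches the quotient in \eqref{eq:cf}, so the ratio can be made arbitrarily close to $c_f$. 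This shows no constant smaller than $c_f$ can work.

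The substantive direction is the upper bound: $\E f(X-\E X)\le c_f\,\E f(X)$ for every $X$ with finite mean. Write $m=\E X$ and note we may assume $\E f(X)<\infty$, else there is nothing to prove. The idea is to condition on the sign of $X-m$ relative to $m$, or more precisely to use the following localization: for fixed $m$ the functional $X\mapsto \E f(X-m)$ is linear in the distribution of $X$, and the constraint $\E X=m$ is also linear, so by a Choquet/extreme-point argument the worst case (largest value of $\E f(X-m)$ for given $\E f(X)$ and given mean $m$) is achieved by a distribution supported on at most two points — one on each side of $0$, because $f(x)=0$ only at $x=0$ forces any mass away from these two extreme positions to be suboptimal. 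Concretely, I would fix the value of $\E f(X)$ and, among all laws with that value of $\E f(X)$ and with mean $m$, maximize $\E f(X-m)$; a two-point law $\{b-t$ w.p.\ $p$, $-a-t$ w.p.\ $1-p\}$ with $a,b>0$, $t$ chosen so that the mean is $m$, and $p=\tfrac{a}{a+b}$ forced by the mean constraint, reduces the ratio to exactly the fraction inside the braces in \eqref{eq:cf}. Taking the supremum over $a,b,t$ then gives the bound $c_f$.

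To make the extreme-point step rigorous without topological hypotheses on $f$ I would argue more elementarily: given any $X$, split $\Omega$ into $\{X-m\ge 0\}$ and $\{X-m<0\}$ (equivalently $\{X\ge m\}$ and $\{X<m\}$), let $b-t:=\E[X-m\mid X\ge m]\ge 0$ and $-(a+t):=\E[X-m\mid X< m]\le 0$ be the conditional means shifted to be centered — so that with appropriate probabilities the two conditional barycenters reproduce $\E X=m$ — set $t:=-m$, so $b=\E[X\mid X\ge m]\ge m$... then use Jensen's inequality in the form $\E f(X-m)\le$ (value on the two-barycenter law) only when $f$ is convex, which we are not assuming; so instead the correct elementary tool is: for each side, $\E f(X-m)\mathbf{I}\{X\ge m\}$ is maximized, subject to fixed $\E f(X)\mathbf{I}\{X\ge m\}$ and fixed $\E (X-m)\mathbf{I}\{X\ge m\}$, by a single atom (a one-point law on a half-line with two linear constraints and one mass constraint), and symmetrically on the other side. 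This gives a two-atom reduction with the two atoms at $b>0$ and $-a<0$ and $t=-m$ free, matching \eqref{eq:cf} after relabeling $t\leftrightarrow -t$. The main obstacle I anticipate is precisely justifying this reduction to two atoms in full generality — handling r.v.'s with $\E f(X)=\infty$ trivially, dealing with the case $m=0$ (where the statement is immediate since $\E f(X-\E X)=\E f(X)$ and $c_f\ge 1$ by taking $t=0$ in \eqref{eq:cf}), and verifying measurability and finiteness of the conditional expectations used in the splitting; once the two-atom reduction is in hand, matching with \eqref{eq:cf} is purely bookkeeping.
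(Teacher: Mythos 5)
Your lower-bound (optimality) argument is fine and is essentially what the paper does: the ratio in \eqref{eq:cf} is exactly $\E f(X_0)/\E f(X_0-t)$ for a zero-mean two-point r.v.\ $X_0$ with values $b,-a$, i.e.\ the ratio $\E f(X-\E X)/\E f(X)$ for $X=X_0-t$. The genuine gap is in the upper bound, in the very step you flag yourself. Your ``elementary tool'' is false as stated: on the half-line $\{X\ge m\}$ you fix three linear quantities (the mass, $\E (X-m)\mathbf{I}\{X\ge m\}$, and $\E f(X)\mathbf{I}\{X\ge m\}$), and a single atom with prescribed mass has only one free parameter (its location), so it generically cannot even satisfy the two remaining constraints, let alone be the maximizer; the extreme points of a moment set with three linear constraints can have up to three atoms per half-line. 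The Choquet-type variant has the same defect (mean $+$ value of $\E f$ $+$ normalization gives extreme points with up to three support points, not two), and in addition attainment/compactness is problematic since $f$ is only assumed nonnegative Borel, not continuous, and measures on $\R$ are not tight a priori. So neither version of your reduction actually delivers the two-point form needed to match \eqref{eq:cf}, and your attempted Jensen shortcut is, as you note, unavailable without convexity.

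The missing idea, which is how the paper proceeds, is to decompose the \emph{centered} law rather than to optimize under constraints: every zero-mean distribution on $\R$ is a mixture of zero-mean distributions supported on at most two points (a known disintegration result). Writing the law of $X-\E X$ as such a mixture, one gets, for every nonnegative Borel $g$, $\E g(X-\E X)=\int \E g(\la X_{1-b,b})\,\mu(\dd\la\times\dd b)$. Applying this once with $g=f$ and once with $g=f(\cdot+\E X)$, and bounding each two-point component by the ratio $\E f(\la X_{1-b,b})/\E f(\la X_{1-b,b}+\E X)$ — which is precisely an instance of the quotient in \eqref{eq:cf} with $t=-\E X/\la$ (the components with $\la=0$ contribute $f(0)=0$ to the left side and are discarded) — one integrates to obtain \eqref{eq:centring} with no maximization, no attainment issues, and no regularity assumptions on $f$. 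If you want to keep your framework, you would have to either prove this mixture representation or supply a correct argument reducing the constrained optimization to two-point laws; as written, that step does not hold.
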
 

All necessary proofs will be given in Section~\ref{proofs}. 

Note that for all $a\in(0,\infty)$, $b\in(0,\infty)$, and  
$t\in\R$ both the numerator and the denominator of the ratio in \eqref{eq:cf} are strictly positive (since $f$ is nonnegative and vanishes only at $0$). 
So, $c_f$ is correctly defined, with possible values in $(0,\infty]$. 

It is possible to say much more about the optimal constant factor $c_f$ in the important case when $f$ is the power function $|\cdot|^p$. To state the corresponding result, let us introduce more notation. 

Take any $a\in(0,\infty)$ and $b\in(0,\infty)$, and let $X_{a,b}$ be any zero-mean r.v.\ with values $-a$ and $b$, so that 
\begin{equation*}
	\P(X_{a,b}=b)=\frac a{a+b}=1-\P(X_{a,b}=-a). 
\end{equation*}
Note that 
\begin{equation*}
	X_{b,a}\eqD-X_{a,b}, 
\end{equation*}
where $\eqD$ denotes the equality in distribution. 

Take any 
\begin{equation}\label{eq:p in}
p\in(1,\infty) 	
\end{equation}
and introduce  
\begin{equation}\label{eq:R}
		R(p,b):=(b^{p - 1} + (1 - b)^{p - 1}) \big(b^{\frac1{p - 1}} + (1 - b)^{\frac1{p - 1}}\big)^{p - 1} \quad\text{for any $b\in[0,1]$.}
\end{equation}

\begin{proposition}\label{lem:}
If $p\ne2$ then there exists 
$b_p\in(0,\frac12)$ such that 
\begin{enumerate}[(i)]
\item
$\pd{R(p,b)}b>0$ for $b\in(0,b_p)$ and hence 
$R(p,b)$ is (strictly) increasing in $b\in[0,b_p]$; 
\item $\pd{R(p,b)}b<0$ for $b\in(b_p,\frac12)$ and hence $R(p,b)$ is 
decreasing in $b\in[b_p,\frac12]$. 
\end{enumerate}
So, $b_p$ is the unique maximizer of $R(p,b)$ over all $b\in[0,\frac12]$. 
\end{proposition}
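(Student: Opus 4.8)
The plan is to analyze the function $b\mapsto R(p,b)$ on $[0,\tfrac12]$ by studying the sign of its derivative, and to reduce the whole statement to showing that $\pd{R(p,b)}b$ changes sign exactly once on $(0,\tfrac12)$, from $+$ to $-$. First I would record the elementary structural facts: $R(p,\cdot)$ is smooth on $(0,1)$, $R(p,0)=R(p,1)=1$ (since the bracketed factors evaluate to $1$), and $R(p,\tfrac12)=2^{p-1}\cdot(2\cdot 2^{-1/(p-1)})^{p-1}=2^{p-1}2^{p-1}2^{-1}=2^{2p-3}$; note also the symmetry $R(p,b)=R(p,1-b)$, which forces $\pd{R(p,b)}b\big|_{b=1/2}=0$. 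For $p\ne 2$ one has $R(p,\tfrac12)=2^{2p-3}\ne 1=R(p,0)$, so $R(p,\cdot)$ is non-constant; moreover for $1<p<2$ one checks $2^{2p-3}<1$ while for $p>2$ one has $2^{2p-3}>1$, so the behavior near the endpoint differs, but in either case I will argue a single interior maximum in $(0,\tfrac12)$.

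The key step is to compute $\pd{R(p,b)}b$ and factor it. Writing $R(p,b)=U(b)\,V(b)^{p-1}$ with $U(b)=b^{p-1}+(1-b)^{p-1}$ and $V(b)=b^{1/(p-1)}+(1-b)^{1/(p-1)}$, logarithmic differentiation gives
\begin{equation*}
\frac{\pd{R(p,b)}b}{R(p,b)}=\frac{U'(b)}{U(b)}+(p-1)\frac{V'(b)}{V(b)},
\end{equation*}
where $U'(b)=(p-1)\big(b^{p-2}-(1-b)^{p-2}\big)$ and $V'(b)=\tfrac1{p-1}\big(b^{(2-p)/(p-1)}-(1-b)^{(2-p)/(p-1)}\big)$. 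Since $R(p,b)>0$, the sign of $\pd{R(p,b)}b$ equals the sign of
\begin{equation*}
g(b):=(p-1)\,\frac{b^{p-2}-(1-b)^{p-2}}{U(b)}+\frac{b^{(2-p)/(p-1)}-(1-b)^{(2-p)/(p-1)}}{V(b)}.
\end{equation*}
I would substitute $b=\tfrac12-s$ with $s\in[0,\tfrac12)$ to exploit the symmetry, so that both numerators are odd functions of $s$, and then show that $g$, viewed as a function of $s$, has a constant sign on a deleted neighborhood of $s=\tfrac12^-$ (i.e. near $b=0^+$) opposite to its sign near $s=0^+$ (i.e. near $b=\tfrac12$), and that it has exactly one zero in between. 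Concretely: near $b=\tfrac12$ a Taylor expansion in $s$ shows $g(b)$ has the same sign as $-(p-2)s$ to leading order (the two terms combine with a net coefficient whose sign is that of $2-p$... I would compute this coefficient carefully), giving $\pd{R(p,b)}b<0$ just left of $\tfrac12$ when $p>2$ and $>0$ when $1<p<2$; near $b=0^+$, the dominant term is $b^{p-2}$ or $b^{(2-p)/(p-1)}$ depending on whether $p<2$ or $p>2$, and in each case one finds $\pd{R(p,b)}b>0$ for $p>2$ and, for $1<p<2$, that $R$ first increases from $R(p,0)=1$. Combined with $R(p,0)=R(p,1)$ and continuity, this already yields existence of an interior maximizer $b_p$; the content of (i)–(ii) is \emph{uniqueness}, i.e. that $g$ has exactly one sign change on $(0,\tfrac12)$.

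The hard part will be this uniqueness/unimodality claim — ruling out oscillation of $g$ on $(0,\tfrac12)$. My plan for it is to show $g(b)=0$ has at most one root in $(0,\tfrac12)$ by rewriting $g(b)=0$ as
\begin{equation*}
(p-1)\,\frac{(1-b)^{p-2}-b^{p-2}}{b^{(2-p)/(p-1)}-(1-b)^{(2-p)/(p-1)}}=\frac{U(b)}{V(b)},
\end{equation*}
(valid on the interval where both sides are well defined and of the same sign, after checking the denominators don't vanish on $(0,\tfrac12)$), and then proving that the left side is strictly monotone in $b$ on $(0,\tfrac12)$ while the right side is strictly monotone the opposite way — or, more robustly, setting $r=b/(1-b)\in(0,1)$ so that the equation becomes $h(r)=0$ for an explicit one-variable function, and showing $h$ is strictly monotone on $(0,1)$, or that $h'$ has no interior zero, by a convexity argument on $\log h$ or by writing $h$ as a sum of monomials in $r^{p-2}$ and $r^{(2-p)/(p-1)}$ with a single sign pattern. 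The case split $1<p<2$ versus $p>2$ swaps the signs of the exponents $p-2$ and $(2-p)/(p-1)$, so I expect to handle the two cases by the same algebraic manipulation with signs tracked symbolically; the possibility that the monotonicity of $h$ degenerates for special $p$ (it shouldn't, precisely because $p\ne 2$) is the delicate point to pin down. Once $g$ is shown to have a unique zero $b_p\in(0,\tfrac12)$ with the sign pattern $+$ then $-$, parts (i), (ii), and the final "unique maximizer" assertion follow immediately.
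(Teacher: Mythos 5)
Your outline correctly reduces the proposition to showing that $\pd{R(p,b)}b$ changes sign exactly once on $(0,\tfrac12)$, from $+$ to $-$, but there is a genuine gap exactly where you flag ``the hard part'': the single-sign-change (unimodality) claim is never actually proved. You list several alternative strategies --- opposite monotonicity of the two sides of the rearranged equation $g(b)=0$, a convexity argument on $\log h$, or a ``single sign pattern'' of monomials after the substitution $r=b/(1-b)$ --- but none is carried out, and none is obviously workable; in particular it is not checked (and not clear) that the left-hand side $(p-1)\,\frac{(1-b)^{p-2}-b^{p-2}}{b^{(2-p)/(p-1)}-(1-b)^{(2-p)/(p-1)}}$ and $U(b)/V(b)$ are monotone in opposite directions on $(0,\tfrac12)$. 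This uniqueness step is the entire substance of the proposition: the paper handles it by first using the duality symmetry $R(p,b)^{1/\sqrt{p-1}}=R(q,b)^{1/\sqrt{q-1}}$ to reduce to $p\in(1,2)$, then substituting $r=p-1$, $x=b/(1-b)$, $z=-\ln x/r$, exhibiting a function $D_1(x)$ equal in sign to $\pd{\ln R(p,b)}b$, and proving $D_1$ is strictly decreasing via an explicit identity expressing a positive multiple of $D_1'$ through hyperbolic sines, each piece of which is shown to be negative; the single sign change then follows from $D_1(0+)=r>0>r-1/r=D_1(1-)$. Nothing of comparable substance appears in your proposal, so as it stands it is a plan rather than a proof.

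In addition, your boundary analysis contains concrete errors. At $b=\tfrac12$ the first factor of $R$ is $2\cdot2^{-(p-1)}=2^{2-p}$, not $2^{p-1}$, so $R(p,\tfrac12)=2^{2-p}\cdot2^{p-2}=1$, not $2^{2p-3}$; hence the asserted dichotomy between $1<p<2$ and $p>2$ near $b=\tfrac12$ is spurious, and in fact $R(p,0)=R(p,\tfrac12)=1$ for all $p>1$. Likewise, the claim that $\pd{R(p,b)}b>0$ just to the left of $b=\tfrac12$ when $1<p<2$ contradicts the statement you are proving (and a direct check: $R(3/2,0.4)\approx1.015>1=R(3/2,0.5)$); the derivative is negative just left of $\tfrac12$ for every $p\ne2$, consistent with $D_1(1-)=r-1/r<0$ for $r\in(0,1)$ and the duality symmetry. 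These slips are repairable, but they indicate that the deferred local expansions were not actually done, and the central uniqueness argument is missing altogether.
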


In Proposition~\ref{lem:} and in the sequel, $\pd{}\cdot$ denotes the partial differentiation with respect to the argument in the subscript. 

\begin{theorem}\label{prop:p}\ 
\begin{enumerate}[(i)]
	\item\label{ineq} Inequality \eqref{eq:p} holds with the constant factor 
\begin{gather}
	C_p:=c_{|\cdot|^p}=\sup_{b\in[0,1]}R(p,b)=\max_{b\in(0,1/2)}R(p,b)=R(p,b_p),   
	\label{eq:C_p} 
\end{gather}	
where $R(p,b)$ is as in \eqref{eq:R}  
and $b_p$ is as in Proposition~\ref{lem:}. 
In particular, $C_2=R(2,b)=1$ for all $b\in[0,1]$. 
\item \label{best}
$C_p$ is the best possible constant factor in \eqref{eq:p}. More specifically, 
the equality in \eqref{eq:p} obtains if and only if one of the following three conditions holds: 
\begin{enumerate}[(a)]
	\item $\E|X|^p=\infty$; 
	\item $p=2$, $\E X^2<\infty$, and $\E X=0$; 
		\item $p\ne2$ and $X\eqD\la(X_{1-b_p,b_p}-t_{b_p})$ for some $\la\in\R$, where 
\begin{equation}\label{eq:t_b}
	t_b:=b-\frac{b^{1/(p-1)}}{b^{1/(p-1)}+(1-b)^{1/(p-1)}} 
\end{equation}
for all $b\in(0,1)$, and $b_p$ is as in Proposition~\ref{lem:}.
\end{enumerate} 
\item \label{symm}
One has the symmetries 
\begin{equation}\label{eq:symm}
	C_p^{1/\sqrt{p-1}}=C_q^{1/\sqrt{q-1}}\quad\text{and}\quad b_p=b_q, 
\end{equation}
where $q$ is dual to $p$ in the sense of $L^p$-spaces: 
\begin{equation*}
	\frac1p+\frac1q=1. 
\end{equation*}
\item \label{asymp}
For $p\to\infty$, 
\begin{equation}\label{eq:C_p sim}
	C_p\sim\frac{2^p}{\sqrt{8ep}}; 
\end{equation}
as usual, $A\sim B$ means that $A/B\to1$. 
\item \label{C_p mono}
$C_p$ is strictly log-convex and hence continuous in $p\in(1,\infty)$; moreover, $C_p$ decreases in $p\in(1,2]$ from $2$ to $1$ and increases in $p\in[2,\infty)$ from $1$ to $\infty$. 
\item \label{C_3} The values of $C_p$, $b_p$, and $t_{b_p}$ are algebraic whenever $p$ is rational; in particular, 
$C_3=\frac1{27} (17+7 \sqrt 7)=1.315...$, $b_3=\frac12-\frac16\,\sqrt{1+2 \sqrt{7}}
=0.0819...$, and $t_{b_3}=-\frac{1}{3} \sqrt{\frac{1}{2} \left(13 \sqrt{7}-34\right)}=-0.148...$. 
\end{enumerate}
\end{theorem}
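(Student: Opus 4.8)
The starting point is Theorem~\ref{prop:centring} applied to $f=|\cdot|^p$. The plan is first to reduce the abstract three-parameter supremum in \eqref{eq:cf} to the one-parameter quantity $\sup_b R(p,b)$. Writing out \eqref{eq:cf} with $f=|\cdot|^p$, the numerator becomes $a b^p + b a^p$ and the denominator $a|b-t|^p + b|{-a-t}|^p$; one may normalize by setting $a+b=1$ (homogeneity of degree $p+1$ in $(a,b,t)$ cancels in the ratio), so $b\in(0,1)$ and $a=1-b$. For each fixed $b$ the supremum over $t$ of the reciprocal of the denominator amounts to \emph{minimizing} $g(t):=(1-b)|b-t|^p + b|a+t|^p$ over $t\in\R$. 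Since $g$ is strictly convex for $p>1$, there is a unique minimizer $t^*=t^*(b)$, found by setting $g'(t)=0$; a short computation gives $t^*=t_b$ as in \eqref{eq:t_b}, and substituting back yields exactly $g(t_b)=\big(b^{1/(p-1)}+(1-b)^{1/(p-1)}\big)^{-(p-1)}$ times the numerator's normalizing factor, so that the ratio equals $R(p,b)$ of \eqref{eq:R}. This establishes $C_p=\sup_{b\in[0,1]}R(p,b)$ and, together with the uniqueness of the minimizer $t_b$ and (for $p\ne2$) of the maximizer $b_p$ from Proposition~\ref{lem:}, also the equality cases in part~\eqref{best}: equality in \eqref{eq:p} forces (after tracing back through the proof of Theorem~\ref{prop:centring}) that $X$ is, up to scaling and recentering, the two-point law $X_{1-b_p,b_p}$. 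The case $p=2$, where $R(2,b)\equiv1$ and equality holds exactly for zero-mean $X\in L^2$, is checked directly. This part — getting the minimization over $t$ and the substitution right, and carefully handling the $\E|X|^p=\infty$ versus $<\infty$ dichotomy — is routine but is where the bulk of the bookkeeping lives.

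For the symmetry \eqref{eq:symm}, I would substitute $b\mapsto b^{1/(p-1)}/(b^{1/(p-1)}+(1-b)^{1/(p-1)})$, which is an involution interchanging the roles of the two exponents $p-1$ and $1/(p-1)=q-1$, and observe that under it the two factors of $R(p,b)$ in \eqref{eq:R} swap roles, giving $R(p,b)=R(q,\cdot)^{(p-1)/(q-1)}$ evaluated at the transformed point — whence $C_p^{1/\sqrt{p-1}}=C_q^{1/\sqrt{q-1}}$ and $b_p=b_q$ (the transformation fixes the unique maximizer since it is an involution preserving the maximum). For part~\eqref{C_3}, algebraicity for rational $p$ is immediate once one knows $b_p$ solves $\pd{R(p,b)}b=0$, which after clearing denominators is a polynomial equation in $b$ with rational exponents — rational when $p$ is; the explicit $C_3$ is then obtained by solving the resulting low-degree equation (for $p=3$ the exponent $1/(p-1)=\tfrac12$ makes $R(3,b)=(b^2+(1-b)^2)(\sqrt b+\sqrt{1-b})^2$, reducing to a quadratic in a suitable variable).

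The asymptotics \eqref{eq:C_p sim} and the monotonicity/convexity in \eqref{C_p mono} are where I expect the real work. For \eqref{C_p mono}, log-convexity of $p\mapsto C_p$ should follow from writing $C_p$ as a supremum over $b$ and $t$ (equivalently over two-point distributions) of $\log$-linear-in-$p$ expressions — i.e. $\log\big(\E|X-\E X|^p/\E|X|^p\big)$ is not obviously convex in $p$ for fixed $X$, so instead I would use the representation $C_p=\sup R(p,b)$ and show $\log R(p,b)$ is convex in $p$ for each fixed $b$ (a supremum of convex functions is convex); the monotone behavior on $(1,2]$ and $[2,\infty)$ then comes from $C_2=1$, continuity, log-convexity, and the endpoint values $C_{1+}=2$ and $C_p\to\infty$. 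For the sharp asymptotic constant $1/\sqrt{8ep}$ in \eqref{eq:C_p sim}, the plan is a Laplace-type analysis of $R(p,b_p)$: as $p\to\infty$ the maximizer $b_p\to0$ at a definite rate, so I would set $b_p = \beta_p/p$ (or a similar scaling), expand $b^{p-1}=(\beta/p)^{p-1}\to0$ while $(1-b)^{p-1}\sim e^{-\beta}$ and $b^{1/(p-1)}\to1$, $(1-b)^{1/(p-1)}\to1$, so that $\big(b^{1/(p-1)}+(1-b)^{1/(p-1)}\big)^{p-1}\sim 2^{p-1}e^{\,(p-1)\log(1 - b/2 + \cdots)}$, and optimize the leading exponential in $\beta$. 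The main obstacle will be making this expansion rigorous and uniform enough to pin down the constant $\sqrt{8e}$ and the power $p^{1/2}$ exactly — in particular, determining the precise rate of $b_p\to0$ from the stationarity equation $\pd{R(p,b)}b=0$ and controlling the error terms in the double limit (one exponent going to $\infty$, the other to $0$) simultaneously.
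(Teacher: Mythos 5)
Your overall architecture matches the paper's: part (i) via Theorem~\ref{prop:centring}, homogeneity, and minimization over $t$ of the strictly convex function $t\mapsto\E|X_{1-b,b}-t|^p$ at $t=t_b$; part (ii) by tracing equality back through the mixture representation and using uniqueness of $t_b$ and $b_p$; part (v) by log-convexity of $\ln R(p,b)$ in $p$ for fixed $b$ plus $C_2=1$ and the endpoint limits; part (vi) by reduction to an algebraic stationarity equation (your quadratic in $\sqrt{b(1-b)}$ for $p=3$ is exactly right). For part (iv) you correctly identify the strategy (scaling $b_p\asymp 1/p$, Laplace-type expansion), but you leave unexecuted precisely the step the paper does carry out: it pins down $b_p\sim\frac1{2(p-1)}$ rigorously by reusing the sign function $D_1$ from the proof of Proposition~\ref{lem:} (evaluating it along $b=k/r$ and invoking the single sign change), and only then expands the two factors of $R$; your ``optimize the leading exponential in $\beta$'' is the same idea, but the rate determination and uniformity you flag as the obstacle is the substance of that part. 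Likewise, in (v) you assert the endpoint value $C_{1+}=2$ without indicating its source (the paper derives it from (iii) combined with (iv)), and strictness of the log-convexity of the supremum needs the attainment of the sup at $b_p$, which the paper points out.

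The genuine flaw is your argument for part (iii). The substitution $b\mapsto\phi_p(b):=b^{1/(p-1)}/\bigl(b^{1/(p-1)}+(1-b)^{1/(p-1)}\bigr)$ is not an involution ($\phi_p^{-1}=\phi_q\ne\phi_p$), and the claimed identity $R(p,b)=R\bigl(q,\phi_p(b)\bigr)^{(p-1)/(q-1)}$ is simply false: for $p=3$, $q=\frac32$, $b=\frac14$ one has $R(3,\tfrac14)=\tfrac{10+5\sqrt3}{16}\approx1.166$, while $\phi_3(\tfrac14)\approx0.366$ and $R(\tfrac32,0.366)^{4}\approx1.107$. Moreover, even if some identity of that shape held, it would only give $b_q=\phi_p(b_p)$, not $b_p=b_q$; the only fixed point of $\phi_p$ is $b=\frac12$, whereas $b_p\in(0,\frac12)$, so ``the involution fixes the unique maximizer'' cannot be repaired. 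The correct — and simpler — fact, which the paper records as \eqref{eq:R-symm} in the proof of Proposition~\ref{lem:}, is the pointwise identity at the \emph{same} $b$: $R(p,b)=R(q,b)^{\,p-1}$, equivalently $R(p,b)^{1/\sqrt{p-1}}=R(q,b)^{1/\sqrt{q-1}}$ (check: the first factor of $R(p,b)$ is the second factor of $R(q,b)$ raised to the power $p-1$, and vice versa). Taking suprema in $b$ then yields both $C_p^{1/\sqrt{p-1}}=C_q^{1/\sqrt{q-1}}$ and $b_p=b_q$ at once, and this identity is also what feeds the $p\downarrow1$ limit $C_{1+}=2$ in part (v); so you should replace your substitution argument by this direct verification.
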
 

By parts \eqref{C_3} and \eqref{C_p mono} of Theorem~\ref{prop:p}, $C_p$ can in principle be however closely bracketed for any real $p\in(1,\infty)$. However, such a calculation may in many cases be inefficient. On the other hand, 
Proposition~\ref{lem:} allows one to bracket the maximizer $b_p$ of $R(b,p)$ however closely and thus, perhaps more efficiently, compute $C_p$ with any degree of accuracy. 

(A part of) the graph of $C_p$ is shown in Figure~\ref{fig:Cp-graph}, and those of $2^p/C_p$ and $b_p$ are shown in Figure~\ref{fig:graphs}. 

\begin{figure}[H]
	\centering
		\includegraphics[width=.8\textwidth]
		{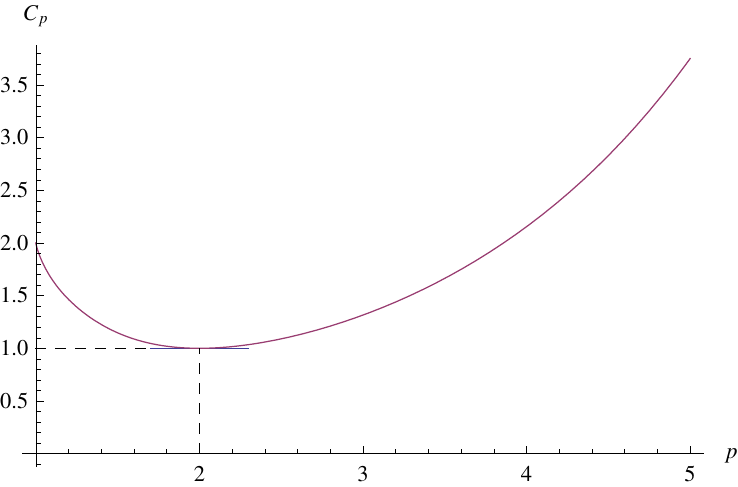}
	\caption{$C_p$ decreases in $p\in(1,2]$ from $2$ to $1$ and increases in $p\in[2,\infty)$ from $1$ to $\infty$. }
	\label{fig:Cp-graph}
\end{figure}

\begin{figure}[H]
	\centering
		\includegraphics[width=1.00\textwidth]{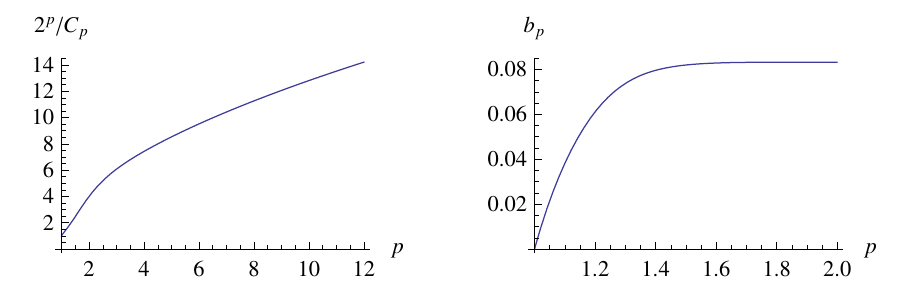}
	\caption{By \eqref{eq:C_p sim}, $2^p/C_p\sim\sqrt{8ep}$ as $p\to\infty$. By \eqref{eq:symm}, $b_p=b_q$; note here also that \break 
$p\in(1,2]\iff q\in[2,\infty)$; by \eqref{eq:b_p sim}, $b_p\sim(p-1)/2$ as $p\downarrow1$.}
	\label{fig:graphs}
\end{figure}

\begin{remark}
What if, instead of the condition \eqref{eq:p in}, one has $p\in(0,1]$? It is easy to see that the inequality \eqref{eq:p} holds for $p=1$ with $C_1=2$ (cf.\ \eqref{eq:Z}), which is then the best possible factor, as seen by letting 
\begin{equation}\label{eq:p=1}
	\text{$X=X_{1-b,b}-b$ with $b\downarrow0$.}
\end{equation}
However, the equality $\E|X-\E X|=2\E|X|$ obtains only if $X\eqD0$; one may also note here that, by part \eqref{C_p mono} of Theorem~\ref{prop:centring}, $C_{1+}=2=C_1$. 
As to $p\in(0,1)$, for each such value of $p$ the best possible factor $C_p$ in \eqref{eq:p} is $\infty$; indeed, consider $X$ as in \eqref{eq:p=1}. 
\end{remark}

\section{Application: Rosenthal-type concentration inequalities for separately Lipschitz functions on product spaces}\label{concentr} 

It is well known that for every $p\in[2,\infty)$ there exist finite positive constants $c_1(p)$ and $c_2(p)$, depending only on $p$, such that for any independent real-valued zero-mean r.v.'s $X_1,\dots,X_n$ 
\begin{equation*}
	\E|Y|^p\le c_1(p)A_p+c_2(p)B^p,
\end{equation*}
where $Y:=X_1+\dots+X_n$, $A_p:=\E|X_1|^p+\dots+\E|X_n|^p$, and $B:=(\E X_1^2+\dots+\E X_n^2)^{1/2}$. 
An inequality of this form was first proved by Rosenthal \cite{rosenthal}, and has since been very useful in many applications. 
It was generalized to martingales \cite[(21.5)]{burk}, including martingales in Hilbert spaces \cite{pin80} and, further, in $2$-smooth Banach spaces \cite{pin94}. 
The constant factors $c_1(p)$ and $c_2(p)$ were actually allowed in \cite{pin80} and \cite{pin94} to depend on certain freely chosen parameters, which provided for optimal in a certain sense sizes of $c_1(p)$ and $c_2(p)$, for any given positive value of the Lyapunov ratio $A_p/B^p$. Best possible Rosenthal-type bounds for sums of independent real-valued zero-mean r.v.'s were given, under different conditions, by Utev \cite{utev-extr} and Ibragimov and Sharakhmetov \cite{ibr-shar97,ibr-sankhya}.  
Also for sums of independent real-valued zero-mean r.v.'s $X_1,\dots,X_n$, Lata{\l}a \cite{latala-moments} obtained an expression $\EE$ in terms of $p$ and the individual distributions of the $X_i$'s such that $a_1\EE\le\|Y\|_p\le a_2\EE$ for some positive absolute constants $a_1$ and $a_2$. 

Given a Rosenthal-type upper bound for real-valued martingales, one 
can use the Yurinski{\u\i} martingale decomposition \cite{yurinskii} and (say) Theorem~\ref{prop:p} to obtain a corresponding upper bound on the $p$th absolute \emph{central} moment of the norm of the sum of independent random vectors in an arbitrary separable Banach space; even more generally, one can obtain such a measure-concentration inequality for separately Lipschitz functions on product spaces.  

To state such a result, let $X_1,\dots,X_n$ be independent r.v.'s with values in measurable spaces $\X_1,\dots,\X_n$, respectively. Let $g\colon\W\to\R$ be a measurable function on the product space $\W:=\X_1\times\dots\times\X_n$. 
Let us say (cf.\ \cite{bent-isr,normal}) that $g$ is {\em separately Lipschitz} if it satisfies a Lipschitz-type condition in each of its arguments: 
\begin{equation}\label{eq:Lip}
|g(x_1,\dots,x_{i-1},\tilde x_i,x_{i+1},\dots,x_n) -
g(x_1,\dots,x_n)| \le \rho_i(\tilde x_i,x_i)
\end{equation}
for some measurable functions $\rho_i\colon\X_i\times\X_i\to\R$ and 
all $i\in\intr1n$, $(x_1,\dots,x_n)\in\W$, and $\tilde x_i\in\X_i$. 
Take now any separately Lipschitz function $g$ and let  
$$Y:=g(X_1,\dots,X_n).$$
Suppose that the r.v.\ $Y$ has a finite mean. 

On the other hand, take any $p\in[2,\infty)$ and suppose that 
positive constants $c_1(p)$ and $c_2(p)$ are such that for all 
real-valued martingales $(\zeta_j)_{j=0}^n$ with $\zeta_0=0$ and differences $\xi_i:=\zeta_i-\zeta_{i-1}$ 
\begin{equation}\label{eq:mart}
	\E|\zeta_n|^p\le c_1(p)\sum_1^n\E|\xi_i|^p+c_2(p)\Big(\sum_1^n\|\E_{i-1}\xi_i^2\|_\infty\Big)^{p/2}, 
\end{equation}
where $\E_j$ denotes the expectation given $\zeta_0,\dots,\zeta_j$. 

Then 
one has 

\begin{corollary}\label{cor:concentr}
For each $i\in\intr1n$, take any $x_i$ and $y_i$ in $\X_i$.  
Then 
\begin{equation}\label{eq:concentr}
	\E|Y-\E Y|^p
	\le C_p c_1(p)\sum_1^n\E\rho_i(X_i,x_i)^p+c_2(p)\Big(\sum_1^n\E\rho_i(X_i,y_i)^2\Big)^{p/2},   
\end{equation}
where $C_p$ is as in \eqref{eq:C_p}. 
\end{corollary}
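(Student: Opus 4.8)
The plan is to reduce the vector/product-space statement to the scalar inequality \eqref{eq:p} via the Yurinski\u\i{} martingale decomposition, and then bound the two resulting martingale characteristics by the stated sums involving $\rho_i$. First I would set $\zeta_0:=0$ and, for $j\in\intr1n$, put
\[
\zeta_j:=\E_j Y-\E Y,\qquad \xi_j:=\zeta_j-\zeta_{j-1}=\E_j Y-\E_{j-1}Y,
\]
where $\E_j$ denotes conditional expectation given $X_1,\dots,X_j$. This is a martingale with $\zeta_n=Y-\E Y$, so \eqref{eq:mart} gives
\[
\E|Y-\E Y|^p\le c_1(p)\sum_1^n\E|\xi_i|^p+c_2(p)\Big(\sum_1^n\|\E_{i-1}\xi_i^2\|_\infty\Big)^{p/2}.
\]
The work is then entirely in estimating $|\xi_i|$ pointwise in terms of $\rho_i$.

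The key observation is that $\xi_i$ depends on $X_i$ only through a re-centering. Writing $Y=g(X_1,\dots,X_n)$ and integrating out $X_{i+1},\dots,X_n$, define
\[
h_i(x_1,\dots,x_i):=\E\big[g(x_1,\dots,x_i,X_{i+1},\dots,X_n)\big].
\]
Then $\E_iY=h_i(X_1,\dots,X_i)$ and, since $X_i$ is independent of $X_1,\dots,X_{i-1}$,
\[
\E_{i-1}Y=\E\big[h_i(X_1,\dots,X_{i-1},X_i')\big],
\]
where $X_i'$ is an independent copy of $X_i$ (and the expectation is over $X_i'$ only, given $X_1,\dots,X_{i-1}$). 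Hence, conditionally on $X_1,\dots,X_{i-1}$, the r.v.\ $\xi_i$ has the form $U-\E U$ where $U:=h_i(X_1,\dots,X_{i-1},X_i)$ is a function of $X_i$ alone. The separately Lipschitz condition \eqref{eq:Lip}, applied in the $i$th coordinate and then integrated over $X_{i+1},\dots,X_n$, yields
\[
|h_i(x_1,\dots,x_{i-1},\tilde x_i)-h_i(x_1,\dots,x_{i-1},x_i)|\le\rho_i(\tilde x_i,x_i),
\]
so in particular $|U-h_i(X_1,\dots,X_{i-1},x_i)|\le\rho_i(X_i,x_i)$ pointwise for any fixed reference point $x_i\in\X_i$.

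Now the two terms are handled differently. For the first term I apply Theorem~\ref{prop:p}\eqref{ineq} conditionally: given $X_1,\dots,X_{i-1}$,
\[
\E_{i-1}|\xi_i|^p=\E_{i-1}|U-\E_{i-1}U|^p\le C_p\,\E_{i-1}|U-h_i(X_1,\dots,X_{i-1},x_i)|^p\le C_p\,\E_{i-1}\rho_i(X_i,x_i)^p,
\]
using that \eqref{eq:p} is translation-invariant, so we may subtract the constant $h_i(X_1,\dots,X_{i-1},x_i)$ before applying it. Taking full expectations and summing gives the $C_p c_1(p)\sum\E\rho_i(X_i,x_i)^p$ contribution. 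For the second term, $\E_{i-1}\xi_i^2=\Var_{i-1}U\le\E_{i-1}|U-h_i(X_1,\dots,X_{i-1},y_i)|^2\le\E_{i-1}\rho_i(X_i,y_i)^2$ (variance is the $L^2$ distance to the mean, hence no better than to any constant $y_i$); since the last bound $\E\rho_i(X_i,y_i)^2$ is a deterministic number, $\|\E_{i-1}\xi_i^2\|_\infty\le\E\rho_i(X_i,y_i)^2$, and raising $\sum_i\E\rho_i(X_i,y_i)^2$ to the power $p/2$ completes \eqref{eq:concentr}. The main obstacle, and the only genuinely delicate point, is the integrability bookkeeping needed to justify Fubini for $h_i$ and to ensure the conditional moments are finite so that the conditional application of Theorem~\ref{prop:p} is legitimate; if $\E\rho_i(X_i,x_i)^p=\infty$ for some $i$ the claimed bound is vacuous, and otherwise these finiteness facts follow from the pointwise Lipschitz estimates and the assumption $\E|Y|<\infty$ together with Theorem~\ref{prop:p} applied one coordinate at a time.
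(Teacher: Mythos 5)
Your proof is correct and follows essentially the same route as the paper: the Yurinski\u\i{} martingale $\zeta_j=\E_j(Y-\E Y)$, the observation that conditionally on $X_1,\dots,X_{i-1}$ the difference $\xi_i$ is a re-centered function of $X_i$ alone (your $U-\E_{i-1}U$ is the paper's $\eta_i-\E_{i-1}\eta_i$ with $\eta_i:=\E_i(Y-\tilde Y_i)$, $\tilde Y_i:=g(X_1,\dots,x_i,\dots,X_n)$), the conditional application of \eqref{eq:p} together with the Lipschitz bound $|\eta_i|\le\rho_i(X_i,x_i)$, and then \eqref{eq:mart}. Your treatment of the quadratic term via the fact that the variance minimizes the $L^2$ distance to constants is just the paper's use of $C_2=1$ in different words.
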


An example of separately Lipschitz functions $g:\X^n\to\R$ is given by the formula 
\begin{equation}\label{eq:g=sum}
	g(x_1,\dots,x_n)=\|x_1+\dots+x_n\|
\end{equation}
for all $x_1,\dots,x_n$ in a separable Banach space $(\X,\|\cdot\|)$. 
In this case, one may take $\rho_i(\tilde x_i,x_i)\equiv\|\tilde x_i-x_i\|$. 
Thus, one immediately obtains 

\begin{corollary}\label{cor:conc-sums}
Let $X_1,\dots,X_n$ be independent random vectors in a Banach space $(\X,\|\cdot\|)$. 
Let here $Y:=\|X_1+\dots+X_n\|$. 
For each $i\in\intr1n$, take any $x_i$ and $y_i$ in $\X_i$.  
Then 
\begin{equation}\label{eq:sum}
	\E|Y-\E Y|^p
	\le C_p c_1(p)\sum_1^n\E\|X_i-x_i\|^p+c_2(p)\Big(\sum_1^n\E\|X_i-y_i\|^2\Big)^{p/2}.    
\end{equation}
\end{corollary}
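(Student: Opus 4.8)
The plan is to derive this as an immediate specialization of Corollary~\ref{cor:concentr}. First I would observe that the map $g\colon\X^n\to\R$ defined by $g(x_1,\dots,x_n)=\|x_1+\dots+x_n\|$ is separately Lipschitz in the sense of \eqref{eq:Lip}: replacing the $i$th argument $x_i$ by $\tilde x_i$ changes the sum inside the norm by $\tilde x_i-x_i$, so by the triangle inequality
\begin{equation*}
\bigl|\,\|x_1+\dots+\tilde x_i+\dots+x_n\| - \|x_1+\dots+x_i+\dots+x_n\|\,\bigr|\le\|\tilde x_i-x_i\|,
\end{equation*}
which is exactly \eqref{eq:Lip} with $\rho_i(\tilde x_i,x_i):=\|\tilde x_i-x_i\|$, a measurable function on $\X_i\times\X_i$ (measurability of the norm and of addition on a separable Banach space being standard). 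Thus $Y=g(X_1,\dots,X_n)=\|X_1+\dots+X_n\|$ falls within the scope of Corollary~\ref{cor:concentr}, provided $Y$ has a finite mean.

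Next I would apply Corollary~\ref{cor:concentr} verbatim with this choice of $g$ and $\rho_i$. Substituting $\rho_i(X_i,x_i)=\|X_i-x_i\|$ and $\rho_i(X_i,y_i)=\|X_i-y_i\|$ into \eqref{eq:concentr} yields precisely \eqref{eq:sum}. No new estimates are needed; the constants $C_p$, $c_1(p)$, $c_2(p)$ are carried over unchanged, with $C_p$ as in \eqref{eq:C_p} and $c_1(p),c_2(p)$ the martingale Rosenthal constants from \eqref{eq:mart}.

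The only point requiring a word of care is the integrability hypothesis: Corollary~\ref{cor:concentr} presupposes $\E|Y|<\infty$, and here $Y=\|X_1+\dots+X_n\|$. If the right-hand side of \eqref{eq:sum} is infinite the inequality is trivial, so one may assume it is finite, and then $\E\|X_i-x_i\|^p<\infty$ for each $i$, whence $\E\|X_i\|<\infty$ and $\E Y\le\sum_i\E\|X_i\|<\infty$ by the triangle inequality; so the hypothesis of Corollary~\ref{cor:concentr} is met. This is really the only (minor) obstacle — everything else is a direct substitution — and it is easily disposed of by the standard reduction to the case of a finite right-hand side.
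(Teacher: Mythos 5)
Your proposal is correct and matches the paper's route exactly: the paper obtains Corollary~\ref{cor:conc-sums} as an immediate specialization of Corollary~\ref{cor:concentr} with $g(x_1,\dots,x_n)=\|x_1+\dots+x_n\|$ and $\rho_i(\tilde x_i,x_i)=\|\tilde x_i-x_i\|$, just as you do. Your extra remark on securing $\E Y<\infty$ (trivial when the right-hand side of \eqref{eq:sum} is infinite, and otherwise guaranteed since $\E\|X_i-x_i\|^p<\infty$ with $p\ge2$ gives $\E\|X_i\|<\infty$) is a harmless tidying of a hypothesis the paper simply carries over from the setup preceding Corollary~\ref{cor:concentr}.
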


Particular cases of separately Lipschitz functions more general than the norm of the sum as in \eqref{eq:g=sum} were discussed earlier in \cite{ineqs-largedev11} and \cite[pages~20--23]{viniti10}. 

For $p=2$, it is obvious that the inequality \eqref{eq:mart} holds with $c_1(2)=1$ and $c_2(2)=0$, and then the inequalities \eqref{eq:concentr}  and \eqref{eq:sum} do so. 
Thus, for $p=2$ \eqref{eq:sum} becomes 
\begin{equation}\label{eq:concentr,p=2}
	\Var Y\le\sum_1^n\E\|X_i-x_i\|^2, 
\end{equation}
since $C_2=1$. 
The 
inequality \eqref{eq:concentr,p=2} was presented in \cite[page~29]{viniti10} and \cite[Theorem~4]{pin-sakh}, based on an improvement of the method of Yurinski\u\i\ \cite{yurinskii}; cf.\ \cite{mcdiarmid89,mcdiarmid98,bent-isr}, \cite[Section~4]{normal}, and \cite[Proposition~2.5]{pin94}. The proof of Corollary~\ref{cor:concentr} 
is based in part on the same kind of improvement. 

The case $p=3$ is also of particular importance in applications, especially to Berry--Esseen-type bounds; cf.\ e.g.\ \cite[Lemma~A1]{bolt93}, \cite[Lemma~6.3]{chen-shao05}, and \cite{nonlinear}. It follows from the main result of \cite{pin80} that \eqref{eq:mart} holds for $p=3$ with 
$c_1(3)=1$ and $c_2(3)=3$, whereas, by part~\eqref{C_3} of Theorem~\ref{prop:p}, $C_3<1.316$. 
Thus, one has an instance of \eqref{eq:sum} with rather small constant factors:  
\begin{equation*}
	\E|Y-\E Y|^3
	\le1.316\,\sum_1^n\E\|X_i-x_i\|^3+3\Big(\sum_1^n\E\|X_i-y_i\|^2\Big)^{3/2}.      
\end{equation*}
Similarly, the more general inequality \eqref{eq:concentr} holds for $p=3$ with $1.316$ and $3$ in place of $C_p c_1(p)$ and $c_2(p)$. 

As can be seen from the proof given in Section~\ref{proofs}, both Corollaries~\ref{cor:concentr} and \ref{cor:conc-sums} will hold even if 
the separately-Lipschitz condition \eqref{eq:Lip} 
is relaxed to 
\begin{equation}\label{eq:LipE}
	|\E g(x_1,\dots,x_{i-1},\tilde x_i,X_{i+1},\dots,X_n) 
- 
\E g(x_1,\dots,x_i,X_{i+1},\dots,X_n)|\le \rho_i(\tilde x_i,x_i). 
\end{equation}

Note also that in Corollaries~\ref{cor:concentr} and \ref{cor:conc-sums} the r.v.'s $X_i$ do not have to be zero-mean, or even to have any definable mean; at that, the arbitrarily chosen $x_i$'s  and $y_i$'s may act as the centers, in some sense, of the distributions of the corresponding $X_i$'s.  

Other inequalities for the distributions of separately Lipschitz functions on product spaces were given in \cite{bent-isr,normal,bahr-esseen}.  

Clearly, the separate-Lipschitz (sep-Lip) condition \eqref{eq:Lip} is easier to check than a joint-Lipschitz one. Also, sep-Lip (especially in the relaxed form \eqref{eq:LipE}) is more generally applicable. On the other hand, when a joint-Lipschitz condition is satisfied, one can generally obtain better bounds. Literature on the concentration of measure phenomenon, almost all of it for joint-Lipschitz settings, is vast; let us mention here only \cite{ledoux-tala,ledoux_book,lat-olesz,bouch-etal,ledoux-olesz}.  

\section{Proofs}\label{proofs}

\begin{proof}[Proof of Theorem~\ref{prop:centring}]
It is well known that any zero-mean probability distribution on $\R$ is a mixture of zero-mean distributions on sets of at most two elements; see e.g.\ 
\cite[Proposition 3.18]{disintegr}. 
So, 
there exists a Borel probability measure $\mu$ on the set 
\begin{equation*}
S:=\R\times(0,1/2] 	
\end{equation*}
such that 
\begin{equation}\label{eq:g}
	\E g(X-\E X)=\int_S\E g(\la X_{1-b,b})\,\mu(\dd\la\times\dd b)
\end{equation}
for all nonnegative Borel functions $g$; the measure $\mu$ depends on the distribution of the r.v.\ $X-\E X$. 
Letting now  
\begin{equation}\label{eq:S_0}
	S_0:=(\R\setminus\{0\})\times(0,1/2] 
\end{equation}
and using the condition $f(0)=0$, one has 
\begin{align}
	\E f(X-\E X)&=\int_S\E f(\la X_{1-b,b})\,\mu(\dd\la\times\dd b) \notag \\ 
	&=\int_{S_0}\E f(\la X_{1-b,b})\,\mu(\dd\la\times\dd b) \notag \\ 
	&\le \tc_f \int_{S_0}\E f(\la X_{1-b,b}+\E X)\,\mu(\dd\la\times\dd b) \label{eq:le tc_f} \\ 
	&\le \tc_f \int_S\E f(\la X_{1-b,b}+\E X)\,\mu(\dd\la\times\dd b) \label{eq:le int S} \\ 
	&=\tc_f \E f\big((X-\E X)+\E X\big)=\tc_f \E f(X), \notag 
\end{align}
where 
\begin{align}
	\tc_f&:=\sup\{\trho_f(\la,b,t)\colon(\la,b)\in S_0, t\in\R\} \quad\text{and} \label{eq:tc_f} \\ 
	\trho_f(\la,b,t)&:=\frac{\E f(\la X_{1-b,b})}{\E f\big(\la(X_{1-b,b}-t)\big)}, \label{eq:trho} 
\end{align}
so that 
\begin{equation}\label{eq:tc_f=c_f}
	\tc_f=c_f. 
\end{equation}
Now the inequality in \eqref{eq:centring} follows from the above multi-line display and \eqref{eq:tc_f=c_f}, and \eqref{eq:tc_f=c_f} \big(together with \eqref{eq:tc_f} and \eqref{eq:trho}\big) also shows that $c_f$ is the best possible constant factor in \eqref{eq:centring}. 
\end{proof}

\begin{proof}[Proof of Proposition~\ref{lem:}] 
%
It is straightforward to check 
the symmetry 
\begin{equation}\label{eq:R-symm}
	R(p,b)^{1/\sqrt{p-1}}=R(q,b)^{1/\sqrt{q-1}}  
\end{equation}
for all $b\in[0,1]$, where $q$ is dual to $p$. 

So, it remains to consider  $p\in(1,2)$. 
Also assume that $b\in(0,1/2)$ and introduce 
\begin{equation}\label{eq:r,x,z}
	r:=p-1,\quad x:=\frac b{1-b},\quad\text{and}\quad z:=-\frac{\ln x}r, 
\end{equation}
so that 
\begin{equation*}
	\text{$r\in(0,1)$,\quad $x\in(0,1)$,\quad and\quad $z\in(0,\infty)$. }
\end{equation*} 
Now introduce  
\begin{align}
	D_1(x)&:=D_1(r,x):=(1-b)\frac{x^r+1}{x^{r-1}-1}\,\pd{\ln R(p,b)}b
	=r - \frac{(x-x^{1/r}) (1 + x^r)}{(x^r-x) (1 + x^{1/r})} \label{eq:D1} \\ 
	\intertext{and} 
	D_2(x)&:=D_2(r,x):=r x^3 (1 + x^{1/r})^2 (x^{r-1}-1)^2\,D_1'(x), 	\label{eq:D2}
\end{align}
so that $D_1(x)$ and $D_2(x)$ equal in sign to $\pd{\ln R(p,b)}b$ and $D_1'(x)$, respectively. 
One can verify the identity 
\begin{equation}\label{eq:D2=}
	D_2(x)e^{(1 + r + r^2)z}/2=D_{21}(z) + (1 - r)D_{22}(z), 
\end{equation}
where 
\begin{align*}
	D_{21}(z)&:=r^2 \sh((1 - r) z) + \sh(r(1 - r) z) - r \sh((1 - r^2) z), \\ 
	D_{22}(z)&:=h(z)-h(rz), \quad h(u):=\sh ru-r\sh u;  
\end{align*}
we use $\sh$ and $\ch$ for $\sinh$ and $\cosh$. 
Note that $h'(u)=r(\ch ru-\ch u)<0$ for $u>0$ and hence 
\begin{equation*}
	D_{22}(z)<0. 
\end{equation*}
Next, 
\begin{equation*}
	\frac{D_{21}'(z)}{(1 - r) r }
	=
 \big(\ch[(1 - r) r z] - \ch[(1 - r^2) z]\big)
 +r \big(\ch[(1 - r) z] - \ch[(1 - r^2) z]\big) <0, 
\end{equation*}
since $(1 - r) r < 1 - r < 1 - r^2$. 
So, $D_{21}(z)$ is decreasing (in $z>0$) and, obviously, $D_{21}(0+)=0$. Hence, $D_{21}(z)<0$ as well. 
Thus, by \eqref{eq:D2=}, $D_2(x)<0$, which shows that $D_1'(x)<0$ and $D_1(x)$ is decreasing -- in $x\in(0,1)$. 
Moreover, $D_1(0+)=r>0>r-1/r=D_1(1-)$. 
It follows, in view of \eqref{eq:D2}, that $D_1(x)$ changes in sign exactly once, from $+$ to $-$, as $x$ increases from $0$ to $1$. Equivalently, by \eqref{eq:D1}, $\pd{\ln R(p,b)}b$ changes in sign exactly once, from $+$ to $-$, as $b$ increases from $0$ to $1/2$. This completes the proof of Proposition~\ref{lem:}.  
\end{proof}

\begin{proof}[Proof of Theorem~\ref{prop:p}]\ 

\noindent\textbf{\eqref{ineq}}\quad 
To begin the proof of part \eqref{ineq} of Theorem~\ref{prop:p}, note that the last two inequalities in \eqref{eq:C_p} follow by the obvious symmetry 
\begin{equation}\label{eq:Rsymm}
R(p,b)=R(p,1-b) \quad\text{for all}\ b\in[0,1] 	
\end{equation}
and Proposition~\ref{lem:}. 

Next, in view of the definition of $C_p$ in \eqref{eq:C_p}, 
inequality \eqref{eq:p} is a special case of \eqref{eq:centring}. 
Moreover, by the definition of $\trho$ in \eqref{eq:trho} and the homogeneity of the  
power function $|\cdot|^p$, 
\begin{equation}\label{eq:trho=}
\trho_{|\cdot|^p}(\la,b,t)=\rho_p(b,t):=\trho_{|\cdot|^p}(1,b,t)=\frac{\E|X_{1-b,b}|^p}{\E |X_{1-b,b}-t|^p}	
\end{equation}
for all $(\la,b)\in S_0$ and $t\in\R$, where $S_0$ is as in \eqref{eq:S_0}. 
Next, the denominator $\E |X_{1-b,b}-t|^p$ decreases in $t\in(-\infty,b-1]$, increases in $t\in[b,\infty)$, and attains its minimum over all $t\in[b-1,b]$ \big(and thus over all $t\in\R$\big) only at $t=t_b$, where $t_b$ is as in \eqref{eq:t_b}. 
%
%
So, 
\begin{equation}\label{eq:max trho=}	\max_{\la\in\R\setminus\{0\},\,t\in\R}\trho_{|\cdot|^p}(\la,b,t)=\max_{t\in\R}\rho_p(b,t)=\rho_p(b,t_b)=R(p,b)  
\end{equation}
for all $b\in(0,1/2]$, in view of \eqref{eq:R}. 
Now \eqref{eq:tc_f=c_f}, \eqref{eq:tc_f}, and \eqref{eq:Rsymm}  
yield  
\begin{equation*}
	c_{|\cdot|^p}=\sup_{b\in(0,1/2]}R(p,b)=\sup_{b\in[0,1]}R(p,b). 
\end{equation*} 
Thus, the proof of \eqref{eq:C_p} and all of part \eqref{ineq} of Theorem~\ref{prop:p} is complete. 

\noindent\textbf{\eqref{best}}\quad 
That the equality in \eqref{eq:p} obtains under either of the conditions (a) or (b) in part \eqref{best} of Theorem~\ref{prop:p} is trivial. 
If the condition (c) of part \eqref{best} holds with $\la=0$, then $X\eqD0$, and again the equality in \eqref{eq:p} is trivial. 
If now (c) holds with some $\la\in\R\setminus\{0\}$ -- so that $X\eqD\la(X_{1-b_p,b_p}-t_{b_p})$, then \eqref{eq:C_p}, \eqref{eq:max trho=}, and \eqref{eq:trho=} imply 
\begin{equation*}
	C_p=R(p,b_p)=\rho_p(b_p,t_{b_p})=\frac{\E|X_{1-b_p,b_p}|^p}{\E |X_{1-b_p,b_p}-t_{b_p}|^p}
	=\frac{\E|X-\E X|^p}{\E |X|^p},  
\end{equation*} 
whence 
the equality in \eqref{eq:p} follows.  
Thus, for the equality in \eqref{eq:p} to hold it is sufficient that one of the conditions (a), (b), or (c) be satisfied. 

Let us now verify the necessity of one of these three conditions. W.l.o.g.\ condition (a) fails to hold, so that $\E|X|^p<\infty$. If now $p=2$ then $C_p=C_2=1$, and the necessity of the condition $\E X=0$ for the equality in \eqref{eq:p} is obvious. It remains to consider the case when $p\ne2$ and $\E|X|^p<\infty$. 
Suppose that one has the equality in \eqref{eq:p} and let $f=|\cdot|^p$. Then, by the definition of $C_p$ in \eqref{eq:C_p} and the equality \eqref{eq:tc_f=c_f}, equalities take place in \eqref{eq:le tc_f} and \eqref{eq:le int S}. 
In view of the condition $\E|X|^p<\infty$, 
the integrals in \eqref{eq:le tc_f} and \eqref{eq:le int S} are both finite and equal to each other. 
So,  
the equality in \eqref{eq:le int S} means that  
$|\E X|^p\,\mu\big(\{0\}\times(0,1/2]\big)=0$.  
If now $\mu\big(\{0\}\times(0,1/2]\big)\ne0$ then $\E X=0$, and the equality in \eqref{eq:p} takes the form $\E|X|^p=C_p\E|X|^p$; but, by part \eqref{C_p mono} of Theorem~\ref{prop:p} (to be proved a bit later), the condition $p\ne2$ implies $C_p>1$, which yields $\E|X|^p=0$, and so, $X\eqD\la(X_{1-b_p,b_p}-t_{b_p})$ for $\la=0$. 
It remains to consider the case when $p\ne2$, $\E|X|^p<\infty$, and $\mu\big(\{0\}\times(0,1/2]\big)=0$. Then $\mu(S_0)=\mu(S)=1$, and the equality in \eqref{eq:le tc_f} (again with $f=|\cdot|^p$), together with \eqref{eq:C_p} and \eqref{eq:tc_f=c_f}, will imply that 
	$\E|\la X_{1-b,b}|^p=C_p\E|\la X_{1-b,b}+\E X|^p$ for $\mu$-almost all $(\la,b)\in S_0$. 
In view of \eqref{eq:trho=}, \eqref{eq:C_p}, Proposition~\ref{lem:}, and \eqref{eq:max trho=}, 
this in turn yields 
\begin{equation*}
\rho_p(b,-\E X/\la)
=R(p,b_p)\ge R(p,b)=\rho_p(b,t_b)  	
\end{equation*}
for $\mu$-almost all $(\la,b)\in S_0$. 
Now recall that for each $b\in(0,1/2]$ the maximum of $\rho_p(b,t)$ in $t\in\R$ is attained only at $t=t_b$. 
It follows that for $\mu$-almost all $(\la,b)\in S_0$ one has  
\begin{enumerate}[(i)]
	\item $R(p,b_p)=R(p,b)$ and hence, by Proposition~\ref{lem:}, $b=b_p$ and 
	\item   
$-\E X/\la=t_b=t_{b_p}$ 
or, equivalently, $\la=-\E X/t_b=-\E X/t_{b_p}=:\la_p$. 
\end{enumerate} 
Therefore, $(\la,b)=(\la_p,b_p)$ for $\mu$-almost all $(\la,b)\in S_0$ and thus for $\mu$-almost all $(\la,b)\in S$. 
Now \eqref{eq:g} shows that $X+\la_p t_{b_p}=X-\E X\eqD\la_p X_{1-b_p,b_p}$ or, equivalently, $X\eqD\la_p(X_{1-b_p,b_p}-t_{b_p})$, which completes the proof of part \eqref{best} of Theorem~\ref{prop:p}. 

\noindent\textbf{\eqref{symm}}\quad  
Part \eqref{symm} of Theorem~\ref{prop:p} follows immediately by the symmetry \eqref{eq:R-symm} of $R(p,b)$ in $p$ and the definitions of $C_p$ and $b_p$ in \eqref{eq:C_p} and Proposition~\ref{lem:}, respectively. 

\noindent\textbf{\eqref{asymp}}\quad 
As in \eqref{eq:r,x,z}, let $r:=p-1$, so that $r\to\infty$. 
For a moment, take any $k\in(0,\infty)$ and choose $b=\frac kr$. 
Then, by \eqref{eq:r,x,z}, $x\sim b=\frac kr$, and now \eqref{eq:D1} yields  
$D_1(r,x)\sim(1-\frac1{2k})r$, whence $D_1(r,x)$ is eventually (i.e., for all large enough $r$) positive or negative according as $k$ is greater or less than $\frac12$. 
So, again by \eqref{eq:r,x,z}, for any real $\check k$ and $\hat k$ such that $0<\check k<\frac12<\hat k$, 
eventually $\pd{R(p,b)}b\big|_{b=\check k/r}<0<\pd{R(p,b)}b\big|_{b=\hat k/r}$.  
It follows by Proposition~\ref{lem:} that 
\begin{equation}\label{eq:b_p sim}
b_p\sim\frac1{2r}, 	
\end{equation}
that is, $b_p=\ka/r$ for some $\ka$ varying with $r$ so that $\ka\to1/2$. 
Hence, 
\begin{equation}\label{eq:factor1}
	(1-b_p)^r+b_p^r=(1-\ka/r)^r+(\ka/r)^r\to e^{-1/2}. 
\end{equation}
Next, $b_p^{1/r}=(\ka/r)^{1/r}=\exp\big(\frac1r\,\ln\frac\ka r\big)=1+\frac1r\,\ln\frac\ka r+O\big(\big(\frac1r\,\ln\frac\ka r\big)^2\big)$ and \break 
$(1-b_p)^{1/r}=1+O(1/r^2)$, whence 
\begin{align*}
	\big((1-b_p)^{1/r}+b_p^{1/r}\big)^r
	&=\Big[2\Big(1+\frac1{2r}\,\ln\frac\ka r+O\Big(\frac{\ln^2 r}{r^2}\Big)\Big)\Big]^r \\ 
	&=\Big[2\exp\Big\{\frac1{2r}\,\ln\frac\ka r+o\Big(\frac1r\Big)\Big\}\Big]^r
	\sim2^r\sqrt{\frac\ka r}\sim\frac{2^p}{\sqrt{8p}}. 
\end{align*}
Recalling now \eqref{eq:C_p}, \eqref{eq:R}, and \eqref{eq:factor1}, one obtains \eqref{eq:C_p sim}. 

\noindent\textbf{\eqref{C_p mono}}\quad 
Take any $b\in(0,1/2)$. Then 
$$d_{2,1}(r):=\pd{}r\pd{}r\ln\big(b^r + (1 - b)^r\big)=
\frac{(1-b)^r b^r}{\big(b^r + (1 - b)^r\big)^2}\, \ln^2\frac{1-b}b>0$$
for all $r>0$. 
Moreover, $d_{2,2}(r):=\pd{}r\pd{}r\ln\big[\big(b^{1/r} + (1 - b)^{1/r}\big)^r\big]=d_{2,1}(1/r)/r^3>0$ for all $r>0$. 
So, $\pd{}p\pd{}p\ln R(p,b)=d_{2,1}(p-1)+d_{2,2}(p-1)>0$, which shows that $R(p,b)$ is strictly log-convex in $p\in(1,\infty)$. 
Also, $\pd{}p\ln R(p,b)\big|_{p=2}=0$, so that $R(p,b)$ decreases in $p\in(1,2]$ and increases in $p\in[2,\infty)$, with $R(2,b)=1$.   
Therefore and in view of \eqref{eq:C_p} -- note in particular the attainment of the supremum there, $C_p$ is strictly log-convex and hence continuous in $p\in(1,\infty)$, and it also follows that $C_p$ decreases in $p\in(1,2]$ and increases in $p\in[2,\infty)$, with $C_p=1$. 
Next, \eqref{eq:C_p sim} shows that $C_p\to\infty$ as $p\to\infty$. 
Letting now $p\downarrow1$ and using \eqref{eq:symm}, one has $q\to\infty$ and hence 
$C_p=C_q^{1/(q-1)}=\big(2^q/\sqrt{(8+o(1))eq}\,\big)^{1/(q-1)}\to2$. 
This completes the proof of part \eqref{C_p mono} of Theorem~\ref{prop:p}. 

\noindent\textbf{\eqref{C_3}}\quad The proof of part \eqref{C_3} of Theorem~\ref{prop:p} is straightforward, in view of \eqref{eq:C_p}, Proposition~\ref{lem:}, \eqref{eq:R}, and \eqref{eq:t_b}.  
\end{proof}

\begin{proof}[Proof of Corollary~\ref{cor:concentr}]\ 
The proof is based on ideas presented in \cite{viniti10,pin-sakh} concerning the use of the mentioned Yurinski{\u\i} martingale decomposition; similar ideas were also used e.g.\ in \cite{bent-isr,normal,bahr-esseen}. 
Consider the martingale defined by the formula $\zeta_j:=\E_j(Y-\E Y)$ for $j\in\intr0n$, where $\E_j$ stands for the conditional expectation given the $\si$-algebra generated by $(X_1,\dots,X_j)$, with $\E_0:=\E$, and then consider the differences $\xi_i:=\zeta_i-\zeta_{i-1}$. 
Next, for each $i\in\intr1n$ introduce the r.v.\ 
\begin{equation*}
\eta_i :=\E_i(Y - \tilde Y_i), 	
\end{equation*}
where $\tilde Y_i := g(X_1,\dots,X_{i-1},x_i,X_{i+1},\dots,X_n)$, so that 
$\xi_i=\eta_i-\E_{i-1}\eta_i$, since the r.v.'s $X_1,\dots,X_n$ are independent.  
Also, in view of \eqref{eq:Lip} or \eqref{eq:LipE}, 
for all $i\in\intr1n$ and $z_i\in\X_i$ one has 
$|\eta_i|\le\rho_i(X_i,z_i)$, whence, by \eqref{eq:p}, 
\begin{align*}
\E_{i-1}|\xi_i|^r=\E_{i-1}|\eta_i-\E_{i-1}\eta_i|^r\le C_r\E_{i-1}|\eta_i|^r
&\le C_r\E_{i-1}\rho_i(X_i,z_i)^r \\ 
&=C_r\E\rho_i(X_i,z_i)^r 	
\end{align*}
for all $r\in(1,\infty)$.  
Now \eqref{eq:concentr} follows from \eqref{eq:mart}, since $\zeta_n=Y-\E Y$ and $C_2=1$. 
\end{proof}  

\bibliographystyle{abbrv}


\bibliography{C:/Users/Iosif/Dropbox/mtu/bib_files/citations}

\def\cprime{$'$} \def\polhk#1{\setbox0=\hbox{#1}{\ooalign{\hidewidth
  \lower1.5ex\hbox{`}\hidewidth\crcr\unhbox0}}}
  \def\polhk#1{\setbox0=\hbox{#1}{\ooalign{\hidewidth
  \lower1.5ex\hbox{`}\hidewidth\crcr\unhbox0}}}
  \def\polhk#1{\setbox0=\hbox{#1}{\ooalign{\hidewidth
  \lower1.5ex\hbox{`}\hidewidth\crcr\unhbox0}}} \def\cprime{$'$}
  \def\polhk#1{\setbox0=\hbox{#1}{\ooalign{\hidewidth
  \lower1.5ex\hbox{`}\hidewidth\crcr\unhbox0}}}
  \def\polhk#1{\setbox0=\hbox{#1}{\ooalign{\hidewidth
  \lower1.5ex\hbox{`}\hidewidth\crcr\unhbox0}}}
\begin{thebibliography}{10}

\bibitem{bent-isr}
V.~Bentkus.
\newblock On measure concentration for separately {L}ipschitz functions in
  product spaces.
\newblock {\em Israel J. Math.}, 158:1--17, 2007.

\bibitem{bolt93}
E.~Bolthausen and F.~G{\"o}tze.
\newblock The rate of convergence for multivariate sampling statistics.
\newblock {\em Ann. Statist.}, 21(4):1692--1710, 1993.

\bibitem{bouch-etal}
S.~Boucheron, O.~Bousquet, G.~Lugosi, and P.~Massart.
\newblock Moment inequalities for functions of independent random variables.
\newblock {\em Ann. Probab.}, 33(2):514--560, 2005.

\bibitem{burk}
D.~L. Burkholder.
\newblock Distribution function inequalities for martingales.
\newblock {\em Ann. Probability}, 1:19--42, 1973.

\bibitem{chen-shao05}
L.~H.~Y. Chen and Q.-M. Shao.
\newblock Stein's method for normal approximation.
\newblock In {\em An introduction to Stein's method}, volume~4 of {\em Lect.
  Notes Ser. Inst. Math. Sci. Natl. Univ. Singap.}, pages 1--59. Singapore
  Univ. Press, Singapore, 2005.

\bibitem{ibr-shar97}
R.~Ibragimov and S.~Sharakhmetov.
\newblock On an exact constant for the {R}osenthal inequality.
\newblock {\em Teor. Veroyatnost. i Primenen.}, 42(2):341--350, 1997.

\bibitem{ibr-sankhya}
R.~Ibragimov and S.~Sharakhmetov.
\newblock On extremal problems and best constants in moment inequalities.
\newblock {\em Sankhy\=a Ser. A}, 64(1):42--56, 2002.

\bibitem{yurinskii}
V.~V. Jurinski{\u\i}.
\newblock Exponential estimates for large deviations.
\newblock {\em Teor. Verojatnost. i Primenen.}, 19:152--154, 1974.

\bibitem{latala-moments}
R.~Lata{\l}a.
\newblock Estimation of moments of sums of independent real random variables.
\newblock {\em Ann. Probab.}, 25(3):1502--1513, 1997.

\bibitem{lat-olesz}
R.~Lata{\l}a and K.~Oleszkiewicz.
\newblock Between {S}obolev and {P}oincar\'e.
\newblock In {\em Geometric aspects of functional analysis}, volume 1745 of
  {\em Lecture Notes in Math.}, pages 147--168. Springer, Berlin, 2000.

\bibitem{ledoux_book}
M.~Ledoux.
\newblock {\em The concentration of measure phenomenon}, volume~89 of {\em
  Mathematical Surveys and Monographs}.
\newblock American Mathematical Society, Providence, RI, 2001.

\bibitem{ledoux-olesz}
M.~Ledoux and K.~Oleszkiewicz.
\newblock On measure concentration of vector-valued maps.
\newblock {\em Bull. Pol. Acad. Sci. Math.}, 55(3):261--278, 2007.

\bibitem{ledoux-tala}
M.~Ledoux and M.~Talagrand.
\newblock {\em Probability in {B}anach spaces}, volume~23 of {\em Ergebnisse
  der Mathematik und ihrer Grenzgebiete (3) [Results in Mathematics and Related
  Areas (3)]}.
\newblock Springer-Verlag, Berlin, 1991.
\newblock Isoperimetry and processes.

\bibitem{mcdiarmid89}
C.~McDiarmid.
\newblock On the method of bounded differences.
\newblock In {\em Surveys in combinatorics, 1989 ({N}orwich, 1989)}, volume 141
  of {\em London Math. Soc. Lecture Note Ser.}, pages 148--188. Cambridge Univ.
  Press, Cambridge, 1989.

\bibitem{mcdiarmid98}
C.~McDiarmid.
\newblock Concentration.
\newblock In {\em Probabilistic methods for algorithmic discrete mathematics},
  volume~16 of {\em Algorithms Combin.}, pages 195--248. Springer, Berlin,
  1998.

\bibitem{tilted-mean}
I.~Pinelis.
\newblock Exact bounds on the truncated-tilted mean, with applications,
  preprint, \url{http://arxiv.org/find/all/1/au:+pinelis/0/1/0/all/0/1}.

\bibitem{bahr-esseen}
I.~Pinelis.
\newblock On the von {B}ahr--{E}sseen inequality, preprint,
  \url{http://arxiv.org/find/all/1/au:+pinelis/0/1/0/all/0/1}.

\bibitem{pin94}
I.~Pinelis.
\newblock Optimum bounds for the distributions of martingales in {B}anach
  spaces.
\newblock {\em Ann. Probab.}, 22(4):1679--1706, 1994.

\bibitem{normal}
I.~Pinelis.
\newblock On normal domination of (super)martingales.
\newblock {\em Electron. J. Probab.}, 11:no. 39, 1049--1070, 2006.

\bibitem{disintegr}
I.~Pinelis.
\newblock Optimal two-value zero-mean disintegration of zero-mean random
  variables.
\newblock {\em Electron. J. Probab.}, 14:no. 26, 663--727, 2009.

\bibitem{winzor}
I.~Pinelis.
\newblock Exact lower bounds on the exponential moments of {W}insorized and
  truncated random variables.
\newblock {\em J. App. Probab.}, 48:547--560, 2011.

\bibitem{nonlinear}
I.~Pinelis and R.~Molzon.
\newblock Berry-{E}ss{\'e}{e}n bounds for general nonlinear statistics, with
  applications to {P}earson's and non-central {S}tudent's and {H}otelling's
  (preprint), ar{X}iv:0906.0177v1 [math.{ST}].

\bibitem{pin80}
I.~F. Pinelis.
\newblock Estimates for moments of infinite-dimensional martingales.
\newblock {\em Math. Notes}, 27(5--6):459--462, 1980.
\newblock MR580071.

\bibitem{viniti10}
I.~F. Pinelis.
\newblock Limit theorems on large deviations for sums of infinite-dimensional
  random variables when the cramer's condition is violated.
\newblock Technical Report 1674-81, All-Russian Scientific and Technical
  Information Institute (VINITI), 1981, April 1981.

\bibitem{ineqs-largedev11}
I.~F. Pinelis.
\newblock On some inequalities for large deviations.
\newblock {\em Teor. Veroyatnost. i Primenen.}, 26(2):428--430, 1981.

\bibitem{pin-sakh}
I.~F. Pinelis and A.~I. Sakhanenko.
\newblock Remarks on inequalities for probabilities of large deviations.
\newblock {\em Theory Probab. Appl.}, 30(1):143--148, 1985.

\bibitem{rosenthal}
H.~P. Rosenthal.
\newblock On the subspaces of {$L^{p}$} {$(p>2)$} spanned by sequences of
  independent random variables.
\newblock {\em Israel J. Math.}, 8:273--303, 1970.

\bibitem{utev-extr}
S.~A. Utev.
\newblock Extremal problems in moment inequalities.
\newblock In {\em Limit theorems of probability theory}, volume~5 of {\em Trudy
  Inst. Mat.}, pages 56--75, 175. ``Nauka'' Sibirsk. Otdel., Novosibirsk, 1985.

\end{thebibliography}

\end{document}